\begin{document}

\title{\parbox{\linewidth}{\footnotesize\noindent } {\LARGE Hamiltonian
vector fields on Weil bundles}}
\author{Norbert\ MAHOUNGOU\ MOUKALA\thanks{{\footnotesize nmahomouk@yahoo.fr}%
}, Basile Guy Richard BOSSOTO\thanks{{\footnotesize bossotob@yahoo.fr}} \and %
Marien NGOUABI University, Brazzaville, Congo}
\date{}
\maketitle

\begin{abstract}
Let $M$ be a paracompact smooth manifold, $A$ a Weil algebra and $M^{A}$ the
associated Weil bundle. In this paper, we give a characterization of
hamiltonian field on $M^{A}$ in the case of Poisson manifold and of
Symplectic manifold.
\end{abstract}

\textbf{Keywords:} Weil algebra, Weil bundle, Poisson manifold, Hamiltonian
vector fields

\textbf{Mathematics Subject classification}: 17D63, 53D17, 53D05, 58A32

\section{Introduction}

In what follows, we denote $M$, a paracompact differentiable manifold of
dimension $n$, $C^{\infty }(M)$ the algebra of smooth functions on $M$ and $A
$ a local algebra in the sense of Andr\'{e} Weil i.e a real commutative
algebra of finite dimension, with unit, and with an unique maximal ideal $%
\mathfrak{m}$ of codimension $1$ over $%
\mathbb{R}
$\cite{wei}.. In this case, there exists an integer $h$ such that $\mathfrak{%
m}^{h+1}=(0)$ and $\mathfrak{m}^{h}\neq (0)$. The integer $h$ is the height
of $A$. Also we have $A=%
\mathbb{R}
\oplus \mathfrak{m}$.

\bigskip We recall that a near point of $x\in M$ of kind $A$ is a morphism
of algebras%
\begin{equation*}
\xi :C^{\infty }(M)\longrightarrow A
\end{equation*}%
such that 
\begin{equation*}
\xi (f)-f(x)\in \mathfrak{m}
\end{equation*}%
for any $f\in C^{\infty }(M)$. We denote $M_{x}^{A}$ the set of near points
of $x\in M$ of kind $A$ and $M^{A}=\bigcup\limits_{x\in M}M_{x}^{A}$ the
manifold of infinitely near points on $M$ of kind $A$ and 
\begin{equation*}
\pi _{M}:M^{A}\longrightarrow M
\end{equation*}%
the projection which assigns every infinitely near point to $x\in M$ to its
origin $x$. The triplet $(M^{A},\pi _{M},M)$ defines a bundle called bundle
of infinitely near points or simply Weil bundle\cite{kms}..\newline

\bigskip When $M$ and $N$ are smooth manifolds and when $h:M\longrightarrow N
$ is a differentiable map of class $C^{\infty },$ then the map 
\begin{equation*}
h^{A}:M^{A}\longrightarrow N^{A},\xi \longmapsto h^{A}(\xi )
\end{equation*}%
such that for all $g$ in $C^{\infty }(N)$, 
\begin{equation*}
\lbrack h^{A}(\xi )](g)=\xi (g\circ h)
\end{equation*}%
is differentiable,\cite{mor}.. Thus, for $f\in C^{\infty }(M),$ the map 
\begin{equation*}
f^{A}:M^{A}\longrightarrow \mathbb{R}^{A}=A,\xi \longmapsto \lbrack
f^{A}(\xi )](id_{\mathbb{R}})=\xi (id_{\mathbb{R}}\circ f)=\xi (f)
\end{equation*}%
is differentiable of class $C^{\infty }$. The set, $C^{\infty }(M^{A},A)$ of
smooth functions on $M^{A}$ with values on $A,$ is a commutative algebra
over $A$ with unit and the map 
\begin{equation*}
C^{\infty }(M)\longrightarrow C^{\infty }(M^{A},A),f\longmapsto f^{A}
\end{equation*}%
is an injective morphism of algebras. Then, we have \cite{bok2}.: 
\begin{equation}
(f+g)^{A}=f^{A}+g^{A};(\lambda \cdot f)^{A}=\lambda \cdot f^{A};(f\cdot
g)^{A}=f^{A}\cdot g^{A}.  \notag
\end{equation}

In \cite{bok2}. and \cite{nbo}., we showed that the following assertions are
equivalent:

\begin{enumerate}
\item A vector field on $M^{A}$ is a differentiable section of the tangent
bundle $(TM^{A},\pi _{M^{A}},M^{A})$.

\item A vector field on $M^{A}$ is a derivation of $C^{\infty }(M^{A})$.

\item A vector field on $M^{A}$ is a derivation of $C^{\infty }(M^{A},A)$
which is $A$-linear.

\item A vector field on $M^{A}$ is a linear map $X:C^{\infty
}(M)\longrightarrow C^{\infty }(M^{A},A)$ such that 
\begin{equation*}
X(f\cdot g)=X(f)\cdot g^{A}+f^{A}\cdot X(g),\quad \text{for any}\,f,g\in
C^{\infty }(M)\text{.}
\end{equation*}
\end{enumerate}

\bigskip In all what follows, we denote $\mathfrak{X}(M^{A})$ the set of
vector fields on $M^{A}$ and $Der_{A}[C^{\infty }(M^{A},A)]$ the set of $A$%
-linear maps%
\begin{equation*}
X:C^{\infty }(M^{A},A)\longrightarrow C^{\infty }(M^{A},A)
\end{equation*}%
such that%
\begin{equation*}
X(\varphi \cdot \psi )=X(\varphi )\cdot \psi +\varphi \cdot X(\psi ),\quad 
\text{for any}\,\varphi ,\psi \in C^{\infty }(M^{A},A)\text{.}
\end{equation*}

Then \cite{nbo}.,%
\begin{equation*}
\mathfrak{X}(M^{A})=Der_{A}[C^{\infty }(M^{A},A)]\text{.}
\end{equation*}

The map%
\begin{equation*}
\mathfrak{X}(M^{A})\times \mathfrak{X}(M^{A})\longrightarrow \mathfrak{X}%
(M^{A}),(X,Y)\longmapsto \lbrack X,Y]=X\circ Y-Y\circ X
\end{equation*}%
is skew-symmetric $A$-bilinear and defines a structure of an $A$-Lie algebra
over $\mathfrak{X}(M^{A})$.

If 
\begin{equation*}
\theta :C^{\infty }(M)\longrightarrow C^{\infty }(M),
\end{equation*}%
is a vector field on $M$, then there exists one and only one $A$-linear
derivation, 
\begin{equation*}
\theta ^{A}:C^{\infty }(M^{A},A)\longrightarrow C^{\infty }(M^{A},A)
\end{equation*}
called prolongation of the vector field $\theta $, such that 
\begin{equation*}
\theta ^{A}(f^{A})=[\theta (f)]^{A}\text{,}\quad \text{for any }\,f\in
C^{\infty }(M)\text{.}
\end{equation*}

If $\theta ,\theta _{1}$ and $\theta _{2}$ are vector fields on $M$ and if $%
f\in C^{\infty }(M)$, then we have:

\begin{enumerate}
\item $(\theta _{1}+\theta _{2})^{A}=\theta _{1}^{A}+\theta _{2}^{A}$;

\item $(f\cdot \theta )^{A}=f^{A}\cdot \theta ^{A}$;

\item $[\theta _{1},\theta _{2}]^{A}=[\theta _{1}^{A},\theta _{2}^{A}]$.
\end{enumerate}

The map%
\begin{equation*}
\mathfrak{X}(M)\longrightarrow Der_{A}[C^{\infty }(M^{A},A)],\theta
\longmapsto \theta ^{A}
\end{equation*}%
is an injective morphism of $\mathbb{R}$-Lie algebras.

\bigskip

\section{Hamiltonian vector fields on weil bundles}

\subsection{Structure of $A$-Poisson manifold on $M^{A}$ when $M$ is a
Poisson manifold}

We recall that a Poisson structure on a smooth manifold $M$ is due to the
existence of a bracket $\left\{ ,\right\} $ on $C^{\infty }(M)$ such that
the pair $(C^{\infty }(M),\{,\})$ is a real Lie algebra such that, for any $%
f\in C^{\infty }(M)$ the map%
\begin{equation}
ad(f):C^{\infty }(M)\longrightarrow C^{\infty }(M),g\longmapsto \{f,g\} 
\notag
\end{equation}%
is a derivation of commutative algebra i.e 
\begin{equation}
\{f,g\cdot h\}=\{f,g\}\cdot h+g\cdot \{f,h\}  \notag
\end{equation}%
for $f,g,h\in C^{\infty }(M)$. In this case we say that $M$ is a Poisson
manifold and $C^{\infty }(M)$ is a Poisson algebra \cite{vai},\cite{vai2}..

We denote%
\begin{equation}
C^{\infty }(M)\longrightarrow Der_{\mathbb{R}}[C^{\infty }(M)],f\longmapsto
ad(f)\text{,}  \notag
\end{equation}%
the adjoint representation and $d_{ad}$ the operator of cohomology
associated to this representation. For any $p\in \mathbb{N}$, 
\begin{equation}
\Lambda _{Pois}^{p}(M)=\mathcal{L}_{sks}^{p}[C^{\infty }(M),C^{\infty }(M)] 
\notag
\end{equation}%
denotes the $C^{\infty }(M)$-module of skew-symmetric multilinear forms of
degree $p$ from $C^{\infty }(M)$ into $C^{\infty }(M)$. We have 
\begin{equation}
\Lambda _{Pois}^{0}(M)=C^{\infty }(M).  \notag
\end{equation}

When $M$ is a smooth manifold, $A$ a weil algebra and $M^{A}$\ the
associated Weil bundle, the $A$-algebra $C^{\infty }(M^{A},A)$ is a Poisson
algebra over $A$ if there exists a bracket $\{,\}$ on $C^{\infty }(M^{A},A)$
such that the pair $(C^{\infty }(M^{A},A),\{,\})$ is a Lie algebra over $A$
satisfying%
\begin{equation*}
\{\varphi _{1}\cdot \varphi _{2},\varphi _{3}\}=\{\varphi _{1},\varphi
_{3}\}\cdot \varphi _{2}+\varphi _{1}\cdot \{\varphi _{2},\varphi _{3}\}
\end{equation*}%
for any $\varphi _{1},\varphi _{2},\varphi _{3}\in C^{\infty }(M^{A},A)$ 
\cite{bok1}..$\newline
$When $M$ is a Poisson manifold with bracket $\{,\}$, for any $f\in
C^{\infty }(M)$, let%
\begin{equation}
\lbrack ad(f)]^{A}:C^{\infty }(M)\longrightarrow C^{\infty
}(M^{A},A),g\longmapsto \{f,g\}^{A}\text{,}  \notag
\end{equation}%
be the prolongation of the vector field $ad(f)$ and let%
\begin{equation}
\widetilde{\lbrack ad(f)]^{A}}:C^{\infty }(M^{A},A)\longrightarrow C^{\infty
}(M^{A},A)  \notag
\end{equation}%
be the unique $A$-linear derivation such that%
\begin{equation*}
\widetilde{\lbrack ad(f)]^{A}}(g^{A})=[ad(f)]^{A}(g)=\{f,g\}^{A}
\end{equation*}%
for any $g\in C^{\infty }(M)$.

For $\varphi \in C^{\infty }(M^{A},A)$, the application%
\begin{equation*}
\tau _{\varphi }:C^{\infty }(M)\longrightarrow C^{\infty
}(M^{A},A),f\longmapsto -\widetilde{[ad(f)]^{A}}(\varphi )
\end{equation*}%
is a vector field on $M^{A}$ considered as derivation of $C^{\infty }(M)$
into $C^{\infty }(M^{A},A)$ and%
\begin{equation*}
\widetilde{\tau _{\varphi }}:C^{\infty }(M^{A},A)\longrightarrow C^{\infty
}(M^{A},A)
\end{equation*}%
the unique $A$-linear derivation (vector field) such that%
\begin{equation}
\widetilde{\tau _{\varphi }}(f^{A})=\tau _{\varphi }(f)=-\widetilde{%
[ad(f)]^{A}}(\varphi )  \notag
\end{equation}%
for any $f\in C^{\infty }(M)$. We have for $f\in C^{\infty }(M)$, 
\begin{equation*}
\widetilde{\tau _{f^{A}}}=\widetilde{[ad(f)]^{A}}\text{,}
\end{equation*}%
and for $\varphi ,\psi \in C^{\infty }(M^{A},A)$ and for $a\in A,$ 
\begin{equation}
\widetilde{\tau }_{\varphi +\psi }=\widetilde{\tau }_{\varphi }+\widetilde{%
\tau }_{\psi };\widetilde{\tau }_{a\cdot \varphi }=a\cdot \widetilde{\tau }%
_{\varphi };\widetilde{\tau }_{\varphi \cdot \psi }=\varphi \cdot \widetilde{%
\tau }_{\psi }+\psi \cdot \widetilde{\tau }_{\varphi }\text{.}  \notag
\end{equation}%
For any $\varphi ,\psi \in C^{\infty }(M^{A},A)$, we let%
\begin{equation}
\{\varphi ,\psi \}_{A}=\widetilde{\tau }_{\varphi }(\psi ).  \notag
\end{equation}%
In \cite{bok1}., we have show that this bracket defines a structure of $A$%
-Poisson algebra on $C^{\infty }(M^{A},A)$.

Thus, when $M$ is a Poisson manifold with bracket $\{,\}$, then $\{,\}_{A}$
is the prolongation on $M^{A}$ of the structure of Poisson on $M$ defined by 
$\{,\}$.

The map%
\begin{equation}
C^{\infty }(M^{A},A)\longrightarrow Der_{A}[C^{\infty }(M^{A},A)],\varphi
\longmapsto \widetilde{\tau _{\varphi }},  \notag
\end{equation}%
is a representation from $C^{\infty }(M^{A},A)$ into $C^{\infty }(M^{A},A).$
We denote $\widetilde{d_{A}}$ the cohomology operator associated to this
adjoint representation \cite{nbo}..\newline
For any $p\in \mathbb{N}$, $\Lambda _{Pois}^{p}(M^{A},\sim _{A})=\mathcal{L}%
_{sks}^{p}[C^{\infty }(M^{A},A),C^{\infty }(M^{A},A)]$ denotes the $%
C^{\infty }(M^{A},A)$-module of skew-symmetric multilinear forms of degree $p
$ on $C^{\infty }(M^{A},A)$ into $C^{\infty }(M^{A},A)$. We have%
\begin{equation}
\Lambda _{Pois}^{0}(M^{A},\sim _{A})=C^{\infty }(M^{A},A)\text{.}  \notag
\end{equation}%
We denote%
\begin{equation}
\Lambda _{Pois}(M^{A},\sim _{A})=\bigoplus_{p=0}^{n}\Lambda
_{Pois}^{p}(M^{A},\sim _{A}).  \notag
\end{equation}%
For $\Omega \in \Lambda _{Pois}^{p}(M^{A},\sim _{A})$ and $\varphi
_{1},\varphi _{2},...,\varphi _{p+1}\in C^{\infty }(M^{A},A)$, we have%
\begin{align*}
\widetilde{d_{A}}\Omega (\varphi _{1},...,\varphi _{p+1})&
=\sum_{i=1}^{p+1}(-1)^{i-1}\widetilde{\tau _{\varphi _{i}}}[\Omega (\varphi
_{1},...,\widehat{\varphi _{i}},...,\varphi _{p+1}] \\
& +\sum_{1\leq i<j\leq p+1}(-1)^{i+j}\Omega (\{\varphi _{i},\varphi
_{j}\}_{A},\varphi _{1},...,\widehat{\varphi _{i}},...,\widehat{\varphi _{j}}%
,...,\varphi _{p+1})
\end{align*}%
where $\widehat{\varphi _{i}}$ means that the term $,\varphi _{i}$ is
omitted.

\begin{proposition}
For any $\eta \in \Lambda _{Pois}^{p}(M)$, we have 
\begin{equation*}
\widetilde{d_{A}}(\eta ^{A})=(d\eta )^{A}\text{.}
\end{equation*}
\end{proposition}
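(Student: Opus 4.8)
The plan is to prove the identity by evaluating both sides on arguments of the special form $f_{1}^{A},\ldots,f_{p+1}^{A}$ with $f_{i}\in C^{\infty}(M)$, and then invoking the structural fact that a prolonged cochain is completely determined by its values on such prolonged arguments. Recall that $\eta^{A}$ is characterized by
\begin{equation*}
\eta^{A}(f_{1}^{A},\ldots,f_{p}^{A})=\left[\eta(f_{1},\ldots,f_{p})\right]^{A},
\end{equation*}
and $(d\eta)^{A}$ satisfies the analogous relation with $d=d_{ad}$. Since $\widetilde{d_{A}}(\eta^{A})$ and $(d\eta)^{A}$ both lie in $\Lambda_{Pois}^{p+1}(M^{A},\sim_{A})$ and are $A$-multilinear and skew-symmetric, it therefore suffices to check that
\begin{equation*}
\widetilde{d_{A}}(\eta^{A})(f_{1}^{A},\ldots,f_{p+1}^{A})=\left[d\eta(f_{1},\ldots,f_{p+1})\right]^{A}
\end{equation*}
for all $f_{i}\in C^{\infty}(M)$, because the right-hand member equals $(d\eta)^{A}(f_{1}^{A},\ldots,f_{p+1}^{A})$ by the defining property of the prolongation.

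The computation of the left-hand side uses the explicit Chevalley--Eilenberg formula for $\widetilde{d_{A}}$ together with three identities recalled above. First, $\widetilde{\tau_{f_{i}^{A}}}=\widetilde{[ad(f_{i})]^{A}}$, and applying this derivation to a prolonged function $h^{A}$ gives $\widetilde{[ad(f_{i})]^{A}}(h^{A})=\{f_{i},h\}^{A}=[ad(f_{i})(h)]^{A}$. Second, the prolonged bracket satisfies
\begin{equation*}
\{f_{i}^{A},f_{j}^{A}\}_{A}=\widetilde{\tau_{f_{i}^{A}}}(f_{j}^{A})=\{f_{i},f_{j}\}^{A}.
\end{equation*}
Third, the defining relation of $\eta^{A}$ converts each evaluation of $\eta^{A}$ on prolonged arguments into the prolongation of the corresponding value of $\eta$. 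Substituting these into the formula for $\widetilde{d_{A}}$ and pulling the prolongation operator $(\cdot)^{A}$ outside each term (legitimate because $(\cdot)^{A}$ is $\mathbb{R}$-linear and multiplicative) yields
\begin{align*}
\widetilde{d_{A}}(\eta^{A})(f_{1}^{A},\ldots,f_{p+1}^{A})
&=\left[\sum_{i=1}^{p+1}(-1)^{i-1}ad(f_{i})\bigl(\eta(f_{1},\ldots,\widehat{f_{i}},\ldots,f_{p+1})\bigr)\right.\\
&\qquad\left.+\sum_{1\leq i<j\leq p+1}(-1)^{i+j}\eta(\{f_{i},f_{j}\},f_{1},\ldots,\widehat{f_{i}},\ldots,\widehat{f_{j}},\ldots,f_{p+1})\right]^{A}.
\end{align*}
The bracketed expression is exactly $d_{ad}\eta(f_{1},\ldots,f_{p+1})$, so the left-hand side equals $\left[d\eta(f_{1},\ldots,f_{p+1})\right]^{A}$, which is the desired equality on prolonged arguments.

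The only delicate point, and the step I expect to carry the real weight, is the reduction to prolonged arguments: one must know that a skew-symmetric $A$-multilinear form in $\Lambda_{Pois}^{p+1}(M^{A},\sim_{A})$ is determined by its values on tuples $(f_{1}^{A},\ldots,f_{p+1}^{A})$. This rests on the prolongation machinery for $M^{A}$, namely that the functions $f^{A}$ generate $C^{\infty}(M^{A},A)$ over $A$ in the sense required for the uniqueness of prolonged objects (the same principle that makes $\theta^{A}$ and $\widetilde{[ad(f)]^{A}}$ well defined by their action on $f^{A}$). Granting this fact, the equality of the two forms on all prolonged tuples forces their global equality, and the proposition follows; everything else is the routine term-by-term substitution carried out above.
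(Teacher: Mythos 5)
Your core computation is sound, but note first that the paper itself offers no proof of this proposition: it is stated bare (it is imported from the cohomology paper \cite{nbo}), so your argument has to stand on its own. The evaluation on prolonged tuples is correct and is certainly the right engine: using $\widetilde{\tau _{f^{A}}}=\widetilde{[ad(f)]^{A}}$, the identity $\{f_{i}^{A},f_{j}^{A}\}_{A}=\{f_{i},f_{j}\}^{A}$, and the defining property $\eta ^{A}(f_{1}^{A},\ldots ,f_{p}^{A})=[\eta (f_{1},\ldots ,f_{p})]^{A}$, each term of the Chevalley--Eilenberg sum for $\widetilde{d_{A}}(\eta ^{A})$ becomes the prolongation of the corresponding term of $d_{ad}\eta $, so that $\widetilde{d_{A}}(\eta ^{A})(f_{1}^{A},\ldots ,f_{p+1}^{A})=[d_{ad}\eta (f_{1},\ldots ,f_{p+1})]^{A}=(d_{ad}\eta )^{A}(f_{1}^{A},\ldots ,f_{p+1}^{A})$.

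The genuine gap is exactly at the step you flagged, and the justification you give for it is false. The functions $f^{A}$ do \emph{not} generate $C^{\infty }(M^{A},A)$ over $A$: take $A=\mathbb{R}[\epsilon ]/(\epsilon ^{2})$, so that $M^{A}=TM$ and $f^{A}=f\circ \pi _{M}+\epsilon \,df$; then every element of the $A$-subalgebra generated by the $f^{A}$ has its $\mathbb{R}$-component constant along the fibres of $TM\longrightarrow M$, so this subalgebra is a proper (and non-dense) subalgebra of $C^{\infty }(M^{A},A)$. Consequently an arbitrary skew-symmetric $A$-multilinear form on $C^{\infty }(M^{A},A)$ is \emph{not} determined by its values on prolonged tuples --- already in degree $1$ there exist nonzero $A$-linear maps vanishing on all $f^{A}$ --- so equality on prolonged tuples alone does not force global equality. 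What actually saves the reduction is not generation but the derivation property: an $A$-linear \emph{derivation} of $C^{\infty }(M^{A},A)$ is determined by its values on the $f^{A}$ (this is the characterization of vector fields recalled in the introduction), and this extends slot by slot to maps that are derivations in each argument. You therefore must establish that both sides are derivations in each argument: for $(d_{ad}\eta )^{A}$ this is built into the prolongation of cochains, but for $\widetilde{d_{A}}(\eta ^{A})$ it has to be verified from the formula for $\widetilde{d_{A}}$, using that each $\widetilde{\tau _{\varphi }}$ is a derivation and that $\{\cdot ,\cdot \}_{A}$ is a derivation in each slot. With that multiderivation property in hand, the slot-by-slot uniqueness argument legitimately upgrades your identity on prolonged tuples to the identity $\widetilde{d_{A}}(\eta ^{A})=(d_{ad}\eta )^{A}$; without it, the final step of your proof fails.
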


\section{Hamiltonian vector fields on weil bundles}

When $M$ is a Poisson manifold with bracket $\left\{ ,\right\} $, a vector
field 
\begin{equation*}
\theta :C^{\infty }(M)\longrightarrow C^{\infty }(M)
\end{equation*}

\begin{enumerate}
\item is locally hamiltonian if $\theta $ is closed for the cohomology
associated with the adjoint representation%
\begin{equation*}
ad:C^{\infty }(M)\longrightarrow Der\left[ C^{\infty }(M)\right]
\end{equation*}%
i.e. $d_{ad}\theta =0$.

\item is globally hamiltonian if $\theta $ is exact for the cohomology
associated with the adjoint representation%
\begin{equation*}
ad:C^{\infty }(M)\longrightarrow Der\left[ C^{\infty }(M)\right]
\end{equation*}%
i.e. there exists $f\in C^{\infty }(M)$ such that $\theta =d_{ad}(f)$.
\end{enumerate}

Thus, a vector field 
\begin{equation*}
X:C^{\infty }(M^{A},A)\longrightarrow C^{\infty }(M^{A},A)
\end{equation*}

\begin{enumerate}
\item is locally hamiltonian if $X$ is closed for the cohomology associated
with the adjoint representation%
\begin{equation*}
\widetilde{\tau _{\varphi }}:C^{\infty }(M^{A},A)\longrightarrow Der_{A}%
\left[ C^{\infty }(M^{A},A)\right]
\end{equation*}%
i.e $\widetilde{d_{A}}X=0$.

\item is globally hamiltonian if $X$ is exact for the cohomology associated
with the adjoint representation%
\begin{equation*}
\widetilde{\tau _{\varphi }}:C^{\infty }(M^{A},A)\longrightarrow Der_{A}%
\left[ C^{\infty }(M^{A},A)\right]
\end{equation*}%
i.e. there exists $\varphi \in C^{\infty }(M^{A},A)$ such that $X=\widetilde{%
d_{A}}(\varphi )$.
\end{enumerate}

\begin{proposition}
When $M$ is a Poisson manifold with bracket $\left\{ ,\right\} $, then a
vector field%
\begin{equation*}
\theta :C^{\infty }(M)\longrightarrow C^{\infty }(M)
\end{equation*}%
is locally hamiltonian if and only if \ the vector field%
\begin{equation*}
\theta ^{A}:C^{\infty }(M^{A},A)\longrightarrow C^{\infty }(M^{A},A).
\end{equation*}%
is locally hamiltonian.
\end{proposition}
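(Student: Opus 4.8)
The plan is to reduce the statement to the commutation identity between the two coboundary operators established in the preceding Proposition, namely $\widetilde{d_A}(\eta ^{A})=(d\eta )^{A}$ for every $\eta \in \Lambda _{Pois}^{p}(M)$, where $d=d_{ad}$ is the coboundary associated with the adjoint representation. First I would observe that the vector field $\theta :C^{\infty }(M)\longrightarrow C^{\infty }(M)$, viewed as a derivation, is in particular a skew-symmetric multilinear form of degree $1$, so that $\theta \in \Lambda _{Pois}^{1}(M)$; moreover its prolongation $\theta ^{A}$ as an $A$-linear derivation (characterized by $\theta ^{A}(f^{A})=[\theta (f)]^{A}$) coincides with its prolongation as a $1$-cochain, i.e. the corresponding element of $\Lambda _{Pois}^{1}(M^{A},\sim _{A})$. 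With this identification the Proposition applies verbatim to $\eta =\theta $ and yields
\begin{equation*}
\widetilde{d_A}(\theta ^{A})=(d_{ad}\theta )^{A}.
\end{equation*}

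Next I would simply unwind the definitions. By definition $\theta $ is locally hamiltonian exactly when $d_{ad}\theta =0$, whereas $\theta ^{A}$ is locally hamiltonian exactly when $\widetilde{d_A}(\theta ^{A})=0$. Combining these two facts with the displayed identity shows that $\theta ^{A}$ is locally hamiltonian if and only if $(d_{ad}\theta )^{A}=0$. The whole statement therefore collapses to the equivalence
\begin{equation*}
d_{ad}\theta =0\quad \Longleftrightarrow \quad (d_{ad}\theta )^{A}=0.
\end{equation*}
The implication from left to right is immediate, since prolongation sends the zero form to the zero form.

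The remaining, and only delicate, point is the converse: that $(d_{ad}\theta )^{A}=0$ forces $d_{ad}\theta =0$, i.e. that the prolongation map $\eta \longmapsto \eta ^{A}$ is injective on $\Lambda _{Pois}^{2}(M)$. Here I would use that $d_{ad}\theta \in \Lambda _{Pois}^{2}(M)$ and that its prolongation is characterized on generators by $(d_{ad}\theta )^{A}(f^{A},g^{A})=[(d_{ad}\theta )(f,g)]^{A}$ for $f,g\in C^{\infty }(M)$. Assuming $(d_{ad}\theta )^{A}=0$ and evaluating on such pairs gives $[(d_{ad}\theta )(f,g)]^{A}=0$ for all $f,g$; since the morphism $C^{\infty }(M)\longrightarrow C^{\infty }(M^{A},A),\,h\longmapsto h^{A}$ is injective, we obtain $(d_{ad}\theta )(f,g)=0$ for all $f,g\in C^{\infty }(M)$, hence $d_{ad}\theta =0$. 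I expect the main obstacle to lie precisely in this step: one must justify that a form in $\Lambda _{Pois}^{2}(M^{A},\sim _{A})$ is completely determined by its values on arguments of the type $f^{A}$, which rests on the fact that the functions $f^{A}$ generate $C^{\infty }(M^{A},A)$ as an $A$-module together with the $A$-multilinearity of $(d_{ad}\theta )^{A}$. Granting this generation property, the injectivity of $h\longmapsto h^{A}$ closes the argument and establishes both implications.
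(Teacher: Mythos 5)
Your proof is correct and follows essentially the same route as the paper: both reduce the statement to the commutation identity $\widetilde{d_{A}}(\eta ^{A})=(d_{ad}\eta )^{A}$ of the preceding Proposition, applied with $p=1$ and $\eta =\theta $. The only difference is that the paper simply asserts the resulting equivalence $d_{ad}\theta =0\Longleftrightarrow \widetilde{d_{A}}(\theta ^{A})=0$, whereas you explicitly supply the nontrivial direction --- injectivity of the prolongation on $2$-cochains, obtained by evaluating on pairs $(f^{A},g^{A})$ and using the injectivity of $h\longmapsto h^{A}$ --- a step the paper leaves implicit.
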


\begin{proof}
Indeed, for any $\eta \in \Lambda _{Pois}^{p}(M)$, we have 
\begin{equation*}
\widetilde{d_{A}}(\eta ^{A})=(d_{ad}\eta )^{A}.
\end{equation*}%
In particular, for $p=1,$ we have%
\begin{equation*}
\widetilde{d_{A}}(\theta ^{A})=(d_{ad}\theta )^{A}\text{.}
\end{equation*}%
Thus, $d_{ad}\theta =0$ if and only if $\widetilde{d_{A}}(\theta ^{A})=0$.
\end{proof}

\bigskip

\begin{proposition}
When $M^{A}$ is a $A$-Poisson manifold with bracket $\left\{ ,\right\} _{A}$%
, then, a vector field 
\begin{equation*}
X:C^{\infty }(M^{A},A)\longrightarrow C^{\infty }(M^{A},A)
\end{equation*}%
locally hamiltonian is a derivation of the Poisson $A$-algebra $C^{\infty
}(M^{A},A).$
\end{proposition}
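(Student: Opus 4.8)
The plan is to unwind the definition of the cohomology operator $\widetilde{d_{A}}$ in degree one and read off the derivation property directly. A vector field $X$, being an $A$-linear map $C^{\infty }(M^{A},A)\longrightarrow C^{\infty }(M^{A},A)$, is in particular an element of $\Lambda _{Pois}^{1}(M^{A},\sim _{A})$, so the displayed coboundary formula applies with $p=1$. First I would specialize that formula and evaluate on two arguments $\varphi ,\psi \in C^{\infty }(M^{A},A)$: the first sum contributes $\widetilde{\tau _{\varphi }}[X(\psi )]$ and $-\widetilde{\tau _{\psi }}[X(\varphi )]$, while the single term of the second sum contributes $-X(\{\varphi ,\psi \}_{A})$, giving
\begin{equation*}
\widetilde{d_{A}}X(\varphi ,\psi )=\widetilde{\tau _{\varphi }}[X(\psi )]-\widetilde{\tau _{\psi }}[X(\varphi )]-X(\{\varphi ,\psi \}_{A}).
\end{equation*}

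Next I would invoke the defining identity $\widetilde{\tau _{\varphi }}(\psi )=\{\varphi ,\psi \}_{A}$ to rewrite the first two terms as brackets, so that
\begin{equation*}
\widetilde{d_{A}}X(\varphi ,\psi )=\{\varphi ,X(\psi )\}_{A}-\{\psi ,X(\varphi )\}_{A}-X(\{\varphi ,\psi \}_{A}).
\end{equation*}
Since $X$ is locally hamiltonian, $\widetilde{d_{A}}X=0$, hence the right-hand side vanishes for all $\varphi ,\psi $. Using the skew-symmetry of $\{,\}_{A}$ to replace $-\{\psi ,X(\varphi )\}_{A}$ by $\{X(\varphi ),\psi \}_{A}$, this is exactly
\begin{equation*}
X(\{\varphi ,\psi \}_{A})=\{X(\varphi ),\psi \}_{A}+\{\varphi ,X(\psi )\}_{A},
\end{equation*}
i.e. $X$ is a derivation of the Lie bracket $\{,\}_{A}$.

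Finally, since $X$ is by hypothesis a vector field, it is already a derivation of the associative product of $C^{\infty }(M^{A},A)$ (this is the very characterization recalled before the statement); combined with the bracket-derivation property just established, $X$ is a derivation of the full Poisson $A$-algebra structure, as claimed. I do not expect a genuine obstacle here: the content is entirely in the degree-one evaluation of $\widetilde{d_{A}}$, and the only care required is correct sign bookkeeping in the coboundary formula together with the single application of skew-symmetry that turns the antisymmetric expression into the symmetric Leibniz identity.
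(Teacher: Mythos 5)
Your proposal is correct and follows essentially the same route as the paper: evaluate $\widetilde{d_{A}}X$ on a pair $(\varphi ,\psi )$ via the degree-one coboundary formula, substitute $\widetilde{\tau _{\varphi }}(\psi )=\{\varphi ,\psi \}_{A}$, and use skew-symmetry to obtain the Leibniz identity for $\{,\}_{A}$. Your closing remark that $X$, being a vector field, is also a derivation of the associative product is a small addition the paper leaves implicit (and you even avoid the paper's typographical slip $\widetilde{\tau _{\psi }}[X(\psi )]$ in place of $\widetilde{\tau _{\psi }}[X(\varphi )]$).
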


\begin{proof}
We have%
\begin{equation*}
\begin{array}{cccc}
\widetilde{d_{A}}X: & C^{\infty }(M^{A},A)\times C^{\infty }(M^{A},A) & 
\longrightarrow  & C^{\infty }(M^{A},A) \\ 
& (\varphi ,\psi ) & \longmapsto  & (\widetilde{d_{A}}X)(\varphi ,\psi )%
\end{array}%
\end{equation*}%
and if $\widetilde{d_{A}}X=0$, then for any $\varphi ,\psi \in C^{\infty
}(M^{A},A)$,%
\begin{eqnarray*}
0 &=&(\widetilde{d_{A}}X)(\varphi ,\psi ) \\
&=&\widetilde{\tau _{\varphi }}[X(\psi )]-\widetilde{\tau _{\psi }}[X(\psi
)]-X(\left\{ \varphi ,\psi \right\} _{A}) \\
&=&\{\varphi ,X(\psi )\}_{A}-\{\psi ,X(\varphi )\}_{A}-X(\{\varphi ,\psi
\}_{A})\text{ }
\end{eqnarray*}%
i.e%
\begin{equation*}
X(\left\{ \varphi ,\psi \right\} _{A})=\{X(\varphi ),\psi \}_{A}+\{\varphi
,X(\psi )\}_{A}\text{.}
\end{equation*}%
That ends the proof.
\end{proof}

\begin{proposition}
Let $M$ be a Poisson manifold with bracket $\left\{ ,\right\} $. If a vector
field 
\begin{equation*}
\theta :C^{\infty }(M)\longrightarrow C^{\infty }(M)
\end{equation*}%
is globally hamiltonian then\ the vector field 
\begin{equation*}
\theta ^{A}:C^{\infty }(M^{A},A)\longrightarrow C^{\infty }(M^{A},A)
\end{equation*}%
is globally hamiltonian.
\end{proposition}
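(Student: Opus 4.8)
The plan is to produce an explicit potential for $\theta ^{A}$, namely the prolongation $f^{A}$ of the function $f$ whose hamiltonian field is $\theta$, and then to read off the conclusion from the first proposition of Section 2 applied in degree zero. First I would unwind the hypothesis: saying that $\theta$ is globally hamiltonian means there is $f\in C^{\infty }(M)$ with $\theta =d_{ad}(f)$, where $f$ is viewed as a $0$-cochain in $\Lambda _{Pois}^{0}(M)=C^{\infty }(M)$.

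Next I would invoke the identity $\widetilde{d_{A}}(\eta ^{A})=(d_{ad}\eta )^{A}$, valid for every $\eta \in \Lambda _{Pois}^{p}(M)$; this is exactly the relation established by the first proposition and already used in the locally hamiltonian case. Taking $p=0$ and $\eta =f$, and noting that the prolongation of the $0$-cochain $f\in \Lambda _{Pois}^{0}(M)=C^{\infty }(M)$ is precisely $f^{A}\in \Lambda _{Pois}^{0}(M^{A},\sim _{A})=C^{\infty }(M^{A},A)$, I obtain
\begin{equation*}
\widetilde{d_{A}}(f^{A})=(d_{ad}f)^{A}=\theta ^{A}.
\end{equation*}
Hence $\theta ^{A}=\widetilde{d_{A}}(\varphi )$ with $\varphi =f^{A}$, which is exactly the definition of $\theta ^{A}$ being globally hamiltonian for the cohomology of the adjoint representation $\varphi \longmapsto \widetilde{\tau _{\varphi }}$.

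The only point that really needs care, and the mild obstacle here, is the degree-zero reading of that identity, i.e. checking that prolongation intertwines $d_{ad}$ and $\widetilde{d_{A}}$ on functions. I would verify this directly on prolonged arguments $g^{A}$: using $\widetilde{d_{A}}(f^{A})(g^{A})=\widetilde{\tau _{g^{A}}}(f^{A})=\{g^{A},f^{A}\}_{A}=\{g,f\}^{A}$ (because $\{,\}_{A}$ is the prolongation of $\{,\}$) together with $\theta ^{A}(g^{A})=[\theta (g)]^{A}=\{g,f\}^{A}$, so that the two sides agree on every $g^{A}$. Since a vector field on $M^{A}$ is $A$-linear and is completely determined by its values on the prolongations $g^{A}$, $g\in C^{\infty }(M)$ (assertion 4 of the introduction), the equality $\widetilde{d_{A}}(f^{A})=\theta ^{A}$ then holds on all of $C^{\infty }(M^{A},A)$, which completes the argument.
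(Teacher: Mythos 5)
Your proof is correct and takes essentially the same route as the paper: the paper's proof is the two-line computation $\theta ^{A}=[ad(f)]^{A}=\widetilde{d_{A}}(f^{A})$, which is precisely your degree-zero instance of the identity $\widetilde{d_{A}}(\eta ^{A})=(d_{ad}\eta )^{A}$, both arguments exhibiting $f^{A}$ as the explicit potential. Your closing verification on prolonged functions $g^{A}$ merely unpacks the identity $\widetilde{\tau _{f^{A}}}=\widetilde{[ad(f)]^{A}}$ that the paper states in Section 2.1 and uses implicitly, so the two proofs coincide in substance.
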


\begin{proof}
Based on the assumptions, there exists $f\in C^{\infty }(M)$ such that $%
\theta =d_{ad}(f)$. Thus, 
\begin{eqnarray*}
\theta ^{A} &=&[ad(f)]^{A} \\
&=&\widetilde{d_{A}}(f^{A}).
\end{eqnarray*}%
Thus, $\theta =d_{ad}(f)$ then $\theta ^{A}=\widetilde{d_{A}}(f^{A})$ is
globally hamiltonian.
\end{proof}

\begin{proposition}
When $M^{A}$ is a $A$-Poisson manifold with bracket $\left\{ ,\right\} _{A}$%
, then a vector field%
\begin{equation*}
X:C^{\infty }(M^{A},A)\longrightarrow C^{\infty }(M^{A},A)
\end{equation*}%
globally hamiltonian is the derivation interior of the Poisson $A$-algebra $%
C^{\infty }(M^{A},A)$.
\end{proposition}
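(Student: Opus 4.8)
The plan is to unwind the definition of global Hamiltonianity and then evaluate the coboundary operator $\widetilde{d_{A}}$ on a $0$-cochain. By hypothesis $X$ is exact for the cohomology of the adjoint representation, so there exists $\varphi \in C^{\infty}(M^{A},A)$ with $X = \widetilde{d_{A}}(\varphi)$. Here $\varphi$ is viewed as an element of $\Lambda_{Pois}^{0}(M^{A},\sim_{A}) = C^{\infty}(M^{A},A)$, so I would feed the general formula for $\widetilde{d_{A}}\Omega$ the value $p=0$. In that case the double sum is vacuous and only the term $i=1$ of the first sum survives; since omitting the single argument leaves the $0$-cochain $\varphi$ itself, this yields, for every $\psi \in C^{\infty}(M^{A},A)$,
\begin{equation*}
X(\psi) = (\widetilde{d_{A}}\varphi)(\psi) = \widetilde{\tau_{\psi}}(\varphi).
\end{equation*}

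Next I would translate the right-hand side back into the $A$-Poisson bracket. By the defining relation $\{\psi,\varphi\}_{A} = \widetilde{\tau_{\psi}}(\varphi)$ together with the skew-symmetry of $\{\,,\,\}_{A}$, I obtain $X(\psi) = \{\psi,\varphi\}_{A} = -\{\varphi,\psi\}_{A} = -\widetilde{\tau_{\varphi}}(\psi)$. Invoking the $A$-linearity relation $\widetilde{\tau}_{a\cdot\varphi} = a\cdot\widetilde{\tau}_{\varphi}$ with $a=-1$, this reads $X = \widetilde{\tau_{-\varphi}}$. Thus $X$ lies in the image of the adjoint representation $C^{\infty}(M^{A},A)\to Der_{A}[C^{\infty}(M^{A},A)]$, $\chi\mapsto\widetilde{\tau_{\chi}}$; equivalently, $X = \{-\varphi,\,\cdot\,\}_{A}$ is precisely the interior (inner) derivation of the Poisson $A$-algebra $C^{\infty}(M^{A},A)$ attached to the element $-\varphi$. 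Since each $\widetilde{\tau_{\chi}}$ is by construction an $A$-linear derivation of $C^{\infty}(M^{A},A)$, no further verification of the derivation property is needed, and the statement is complete.

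The argument is entirely formal once the $p=0$ specialization of $\widetilde{d_{A}}$ is in hand, so I do not expect a genuine obstacle; the only point I would pin down carefully is the sign convention. Depending on the orientation fixed for $\widetilde{d_{A}}$ on $0$-cochains, the identity reads either $\widetilde{d_{A}}(\varphi)=\widetilde{\tau_{\varphi}}$ or $\widetilde{d_{A}}(\varphi)=-\widetilde{\tau_{\varphi}}$. Because $\widetilde{\tau_{-\varphi}}=-\widetilde{\tau_{\varphi}}$, this sign is immaterial for the conclusion: in either case $X$ belongs to the image of $\chi\mapsto\widetilde{\tau_{\chi}}$, which is exactly the meaning of ``interior derivation'' here. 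For coherence with the globally Hamiltonian prolongation result, where $\theta=d_{ad}(f)$ gives $\theta^{A}=\widetilde{\tau_{f^{A}}}$, I would adopt the convention making $\widetilde{d_{A}}(f^{A})=\widetilde{\tau_{f^{A}}}$ and record the Hamiltonian potential with the matching sign.
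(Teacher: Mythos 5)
Your proposal is correct and takes essentially the same route as the paper: unwind exactness to get $X=\widetilde{d_{A}}(\varphi )$, evaluate the coboundary on the $0$-cochain, and recognize the result as the adjoint map $\psi \longmapsto \{\varphi ,\psi \}_{A}=\widetilde{\tau _{\varphi }}(\psi )$, i.e.\ an interior derivation. The only divergence is the sign: the paper asserts $(\widetilde{d_{A}}\varphi )(\psi )=\widetilde{\tau }_{\varphi }(\psi )$ directly (its convention for $\widetilde{d_{A}}$ on $0$-cochains, consistent with its earlier use $\widetilde{d_{A}}(f^{A})=\widetilde{[ad(f)]^{A}}$), whereas you apply the displayed $p=0$ formula, obtain $\widetilde{\tau }_{\psi }(\varphi )$, and absorb the resulting minus sign into the potential $-\varphi $ --- which, as you correctly note, is immaterial to the conclusion.
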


\begin{proof}
If the vector field 
\begin{equation*}
X:C^{\infty }(M^{A},A)\longrightarrow C^{\infty }(M^{A},A)
\end{equation*}%
is globally hamiltonian, there exists $\varphi \in C^{\infty }(M^{A},A)$
suth that $X=\widetilde{d_{A}}\varphi .$ For any $\psi \in C^{\infty
}(M^{A},A),$ we have%
\begin{eqnarray*}
X(\psi ) &=&(\widetilde{d_{A}}\varphi )(\psi ) \\
&=&\widetilde{\tau }_{\varphi }(\psi ) \\
&=&\{\varphi ,\psi \}_{A}
\end{eqnarray*}%
i.e. $X=ad(\varphi )$. $\ $where 
\begin{equation*}
ad(\varphi ):C^{\infty }(M^{A},A)\longrightarrow C^{\infty }(M^{A},A),\psi
\longmapsto \{\varphi ,\psi \}_{A}
\end{equation*}

Thus, $X$ is globally hamiltonian if there exists $\varphi \in C^{\infty
}(M^{A},A)$ such that $X=\widetilde{\tau _{\varphi }}=ad(\varphi )$ i.e. $X$
is the interior derivation of the Poisson $A$-algebra $C^{\infty }(M^{A},A).$
\end{proof}

\bigskip

\subsection{\protect\bigskip Hamiltonian vector fields on $M^{A}$ $\ $when $%
M $ is a symplectic manifold}

When $(M,\Omega )$ is a symplectic manifold, then $(M^{A},\Omega ^{A})$ is a
symplectic $A$-manifold \cite{bok1}.

For any $f\in C^{\infty }(M)$, we denote $X_{f}$ the unique vector field on $%
M$ \ such that 
\begin{equation*}
i_{X_{f}}\Omega =df
\end{equation*}%
where 
\begin{equation*}
d:\Lambda (M)\longrightarrow \Lambda (M)
\end{equation*}%
is the operator of de Rham cohomology. We denote 
\begin{equation*}
d^{A}:\Lambda (M^{A},A)\longrightarrow \Lambda (M^{A},A)
\end{equation*}%
the operator of cohomology associated with the representation 
\begin{equation*}
\mathfrak{X}(M^{A})\longrightarrow Der\left[ C^{\infty }(M^{A},A)\right]
,X\longmapsto X\text{.}
\end{equation*}%
For $\varphi \in C^{\infty }(M^{A},A)$, we denote $X_{\varphi }$ the unique
vector field on $M^{A}$, considered as a derivation of $C^{\infty }(M^{A},A)$
into $C^{\infty }(M^{A},A)$, such that 
\begin{equation*}
i_{X_{\varphi }}\Omega ^{A}=d^{A}(\varphi )\text{.}
\end{equation*}%
The bracket 
\begin{align*}
\left\{ \varphi ,\psi \right\} _{\Omega ^{A}}& =-\Omega ^{A}(X_{\varphi
},X_{\psi }) \\
& =X_{\varphi }(\psi )
\end{align*}%
defines a structure of $A$-Poisson manifold on $M^{A}$ and for any $f\in
C^{\infty }(M)$, 
\begin{equation*}
X_{f^{A}}=(X_{f})^{A}\text{.}
\end{equation*}

\begin{equation*}
i_{(X_{f})^{A}}\Omega ^{A}=i_{X_{f^{A}}}\Omega ^{A}\text{.}
\end{equation*}

We deduce that \cite{bok1}.:

\begin{theorem}
If $(M,\Omega )$ is a symplectic manifold, the structure of $A$-Poisson
manifold on $M^{A}$ defined by $\Omega ^{A}$ coincide with the prolongation
on $M^{A}$ of the Poisson structure on $M$ defined by the symplectic form $%
\Omega $ i.e for any $\varphi \in C^{\infty }(M^{A},A)$, $\widetilde{\tau
_{\varphi }}=X_{\varphi }$.
\end{theorem}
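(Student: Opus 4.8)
The plan is to reduce the equality of the two $A$-linear derivations $\widetilde{\tau_\varphi}$ and $X_\varphi$ to a computation on the generating functions $f^A$, $f\in C^\infty(M)$, and then to identify the relevant ingredients on the base $M$. The crucial structural input is the equivalence recalled in the introduction between assertions $3$ and $4$: an $A$-linear derivation of $C^\infty(M^A,A)$ is completely determined by its restriction $f\mapsto X(f^A)$ to functions of the form $f^A$. Hence it suffices to prove $\widetilde{\tau_\varphi}(f^A)=X_\varphi(f^A)$ for every $f\in C^\infty(M)$ and every fixed $\varphi\in C^\infty(M^A,A)$.

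First I would compute the left-hand side. By the very definition of $\tau_\varphi$ and of $\widetilde{\tau_\varphi}$ recalled above, one has $\widetilde{\tau_\varphi}(f^A)=\tau_\varphi(f)=-\widetilde{[ad(f)]^A}(\varphi)$, so no further work is needed on this side. For the right-hand side I would use that the symplectic bracket $\{\varphi,\psi\}_{\Omega^A}=-\Omega^A(X_\varphi,X_\psi)=X_\varphi(\psi)$ is skew-symmetric; applying this with $\psi=f^A$ gives $X_\varphi(f^A)=\{\varphi,f^A\}_{\Omega^A}=-X_{f^A}(\varphi)$, and then the identity $X_{f^A}=(X_f)^A$ stated just before the theorem turns this into $X_\varphi(f^A)=-(X_f)^A(\varphi)$.

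The key identification, which I expect to be the main point, is $\widetilde{[ad(f)]^A}=(X_f)^A$ as $A$-linear derivations of $C^\infty(M^A,A)$. On the base $M$ the Poisson structure carried by the symplectic form satisfies $ad(f)(g)=\{f,g\}=X_f(g)$ for all $g$, so $ad(f)$ and $X_f$ coincide as vector fields on $M$. Consequently their prolongations coincide; concretely, both $\widetilde{[ad(f)]^A}$ and $(X_f)^A$ are the unique $A$-linear derivation of $C^\infty(M^A,A)$ sending $g^A$ to $\{f,g\}^A=[X_f(g)]^A$, whence they are equal. Combining the three displays yields $\widetilde{\tau_\varphi}(f^A)=-\widetilde{[ad(f)]^A}(\varphi)=-(X_f)^A(\varphi)=X_\varphi(f^A)$ for all $f$, and the determination principle then forces $\widetilde{\tau_\varphi}=X_\varphi$.

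The main obstacle is not in the bracket manipulations, which are routine, but in being careful with sign conventions when asserting $ad(f)=X_f$ on $M$, and in invoking the determination-by-$f^A$ principle legitimately, i.e. making sure that the correspondence of assertions $3$ and $4$ really is a bijection so that agreement on all $f^A$ implies agreement of the derivations themselves. Once these two foundational points are secured, the equality holds for every $\varphi$ simultaneously, since $\varphi$ was arbitrary throughout.
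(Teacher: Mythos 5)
Your proof is correct. Note that the paper itself gives no proof of this theorem: it is stated as a deduction from the identity $X_{f^{A}}=(X_{f})^{A}$ displayed just before it, with a citation to the reference on $A$-Poisson structures on Weil bundles. Your argument is precisely the natural completion of that deduction — reduce to the generators $f^{A}$ via the uniqueness of an $A$-linear derivation with prescribed values on such functions, compute $\widetilde{\tau _{\varphi }}(f^{A})=-\widetilde{[ad(f)]^{A}}(\varphi )$ on one side, use skew-symmetry of $\{,\}_{\Omega ^{A}}$ and $X_{f^{A}}=(X_{f})^{A}$ on the other, and close the loop with the identification $ad(f)=X_{f}$ on $M$, whose prolongation argument (agreement on all $g^{A}$, hence equality) is exactly the right justification under the paper's sign convention $\{f,g\}=X_{f}(g)$.
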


Therefore, for any $\varphi ,\psi \in C^{\infty }(M^{A},A)$, we have 
\begin{equation*}
\left\{ \varphi ,\psi \right\} _{\Omega ^{A}}=\left\{ \varphi ,\psi \right\}
_{A}\text{.}
\end{equation*}

\begin{proposition}
If $\omega $ is a differential form on $M$ and if $\theta $ is a vector
field on $M$, then 
\begin{equation*}
(i_{\theta }\omega )^{A}=i_{\theta ^{A}}(\omega ^{A})\text{.}
\end{equation*}
\end{proposition}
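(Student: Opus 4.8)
The plan is to prove the identity $(i_\theta\omega)^A = i_{\theta^A}(\omega^A)$ by exploiting the fact that both sides are $C^\infty(M^A,A)$-linear (in fact $A$-multilinear skew-symmetric) forms, so it suffices to check that they agree on prolongations of vector fields. Recall from the excerpt that the map $\mathfrak{X}(M)\longrightarrow Der_A[C^\infty(M^A,A)]$, $\theta\longmapsto\theta^A$, is an injective morphism of Lie algebras, and that prolongation commutes with the coboundary operators (as in the Proposition stating $\widetilde{d_A}(\eta^A)=(d\eta)^A$). The key structural fact I would invoke is that a form $\omega\in\Lambda^p(M)$ and its prolongation $\omega^A\in\Lambda^p(M^A,A)$ are related by evaluation on prolonged fields, namely
\begin{equation*}
\omega^A(\theta_1^A,\dots,\theta_p^A) = [\omega(\theta_1,\dots,\theta_p)]^A,
\end{equation*}
which is the natural compatibility between prolongation of forms and prolongation of vector fields.

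First I would fix $\omega\in\Lambda^{p}(M)$ and $\theta\in\mathfrak{X}(M)$, so that $i_\theta\omega\in\Lambda^{p-1}(M)$ and hence $(i_\theta\omega)^A\in\Lambda^{p-1}(M^A,A)$. To verify the claimed equality I would evaluate both sides on an arbitrary $(p-1)$-tuple of prolonged vector fields $\theta_1^A,\dots,\theta_{p-1}^A$. On the left, using the evaluation-compatibility above,
\begin{equation*}
(i_\theta\omega)^A(\theta_1^A,\dots,\theta_{p-1}^A) = [(i_\theta\omega)(\theta_1,\dots,\theta_{p-1})]^A = [\omega(\theta,\theta_1,\dots,\theta_{p-1})]^A.
\end{equation*}
On the right, by the definition of interior product on $M^A$ and again the compatibility,
\begin{equation*}
i_{\theta^A}(\omega^A)(\theta_1^A,\dots,\theta_{p-1}^A) = \omega^A(\theta^A,\theta_1^A,\dots,\theta_{p-1}^A) = [\omega(\theta,\theta_1,\dots,\theta_{p-1})]^A,
\end{equation*}
so the two expressions coincide on all such tuples.

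The remaining step is to upgrade agreement on prolonged fields to agreement as forms on all of $C^\infty(M^A,A)$. Here I would use that $\Lambda^{p-1}(M^A,A)$ consists of $C^\infty(M^A,A)$-multilinear skew-symmetric maps, together with the fact that prolonged vector fields $\theta_i^A$ generate $\mathfrak{X}(M^A)=Der_A[C^\infty(M^A,A)]$ as a module over $C^\infty(M^A,A)$; since both $(i_\theta\omega)^A$ and $i_{\theta^A}(\omega^A)$ are $C^\infty(M^A,A)$-linear in each slot, their agreement on a generating family forces them to be equal. I expect the main obstacle to be precisely this density/generation argument: one must justify that it is enough to test on prolongations $\theta_i^A$, which rests on the local structure of $M^A$ and on the module-generating property of prolonged fields. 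The purely algebraic evaluation identities are routine once the compatibility $\omega^A(\theta_1^A,\dots,\theta_p^A)=[\omega(\theta_1,\dots,\theta_p)]^A$ is in hand, so the crux is establishing (or citing) that compatibility and the generation statement.
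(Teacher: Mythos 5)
Your proposal is correct and follows essentially the same route as the paper: both proofs evaluate $(i_{\theta }\omega )^{A}$ and $i_{\theta ^{A}}(\omega ^{A})$ on tuples of prolonged vector fields $\theta _{1}^{A},\ldots ,\theta _{p-1}^{A}$, obtain $\left[ \omega (\theta ,\theta _{1},\ldots ,\theta _{p-1})\right] ^{A}$ on each side, and conclude by the fact that a differential $A$-form on $M^{A}$ is determined by its values on prolonged fields. The only difference is cosmetic: the paper packages that last determination step as the defining uniqueness property of the prolonged form $(i_{\theta }\omega )^{A}$, whereas you justify it explicitly via generation of $\mathfrak{X}(M^{A})$ as a $C^{\infty }(M^{A},A)$-module, which is the same fact.
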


\begin{proof}
If the degree of $\omega $ is $p$, then $(i_{\theta }\omega )^{A}$ is the
unique differential $A$-form of degree $p-1$ such that 
\begin{align*}
(i_{\theta }\omega )^{A}(\theta _{1}^{A},...,\theta _{p-1}^{A})& =\left[
(i_{\theta }\omega )(\theta _{1},...,\theta _{p-1})\right] ^{A} \\
& =\left[ \omega (\theta ,\theta _{1},...,\theta _{p-1})\right] ^{A}
\end{align*}%
for any $\theta _{1},\theta _{2},...,\theta _{p-1}\ \in $ $\mathfrak{X}(M)$.
As $i_{\theta ^{A}}(\omega ^{A})$ is of degree $p-1$ and is such that%
\begin{align*}
i_{\theta ^{A}}(\omega ^{A})\left[ \theta _{1}^{A},...,\theta _{p-1}^{A}%
\right] & =\omega ^{A}(\theta ^{A},\theta _{1}^{A},...,\theta _{p-1}^{A}) \\
& =\left[ \omega (\theta ,\theta _{1},...,\theta _{p-1})\right] ^{A}
\end{align*}%
for any $\theta _{1},\theta _{2},...,\theta _{p-1}\ \in $ $\mathfrak{X}(M)$,
we conclude that $(i_{\theta }\omega )^{A}=i_{\theta ^{A}}(\omega ^{A})$.
\end{proof}

When $(M,\Omega )$ is a symplectic manifold,

\begin{enumerate}
\item a vector field $\theta $ on $M$ is locally hamiltonian if the form $%
i_{\theta }\Omega $ is closed for the de Rham cohomology and $\theta $ is
globally hamiltonian if there exists $f\in C^{\infty }(M)$ such that $%
i_{\theta }\Omega =-d(f),$ i.e. the form $i_{\theta }\Omega $ is $d$-exact.

\item a vector field $X$ on $M^{A}$ is locally hamiltonian if the form $%
i_{X}\Omega ^{A}$ is $d^{A}$-closed and $X$ is globally hamiltonian if there
exists $\varphi \in C^{\infty }(M^{A},A)$ such that $i_{X}\Omega
^{A}=-d^{A}(\varphi )$, i.e. the form $i_{X}\Omega ^{A}$ is $d^{A}$-exact.
\end{enumerate}

\begin{proposition}
A vector field $\theta :C^{\infty }(M)\longrightarrow C^{\infty }(M)$ on a
symplectic manifold $M$ is locally hamiltonian, if and only if $\theta
^{A}:C^{\infty }(M^{A},A)\longrightarrow C^{\infty }(M^{A},A)$ is a locally
hamiltonian vector field.
\end{proposition}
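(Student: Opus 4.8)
The plan is to reduce the statement to the key commutation relation between prolongation and the de Rham–type cohomology operators, exactly as the Poisson-case Proposition was reduced to $\widetilde{d_A}(\eta^A)=(d\eta)^A$. For a symplectic manifold, local Hamiltonicity of $\theta$ means that the one-form $i_\theta\Omega$ is $d$-closed, and local Hamiltonicity of $\theta^A$ means that $i_X\Omega^A$ is $d^A$-closed with $X=\theta^A$. So the task is to show that $i_{\theta^A}\Omega^A$ is $d^A$-closed if and only if $i_\theta\Omega$ is $d$-closed.

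**First I would** invoke the Proposition immediately preceding this statement, namely $(i_\theta\omega)^A=i_{\theta^A}(\omega^A)$, applied with $\omega=\Omega$. This gives
\begin{equation*}
i_{\theta^A}\Omega^A=(i_\theta\Omega)^A.
\end{equation*}
Next I would use a commutation relation of the form $d^A(\alpha^A)=(d\alpha)^A$ for differential forms $\alpha$ on $M$; this is the de Rham analogue of the already-stated identity $\widetilde{d_A}(\eta^A)=(d\eta)^A$ and should hold by the same prolongation argument, since $d^A$ is defined as the cohomology operator of the representation $X\mapsto X$ and prolongation is a Lie-algebra morphism commuting with the Koszul formula for the differential. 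Applying this with $\alpha=i_\theta\Omega$ yields
\begin{equation*}
d^A\bigl(i_{\theta^A}\Omega^A\bigr)=d^A\bigl((i_\theta\Omega)^A\bigr)=(d\,i_\theta\Omega)^A.
\end{equation*}

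**Then** the equivalence follows from injectivity of prolongation: the map $f\mapsto f^A$ (and more generally $\alpha\mapsto\alpha^A$ on forms) is injective, so $(d\,i_\theta\Omega)^A=0$ holds if and only if $d\,i_\theta\Omega=0$. Hence $\theta^A$ is locally Hamiltonian, i.e. $d^A(i_{\theta^A}\Omega^A)=0$, exactly when $\theta$ is locally Hamiltonian, i.e. $d(i_\theta\Omega)=0$. This closes both directions simultaneously.

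**The main obstacle** I expect is justifying the identity $d^A(\alpha^A)=(d\alpha)^A$ for arbitrary-degree forms rather than taking it for granted: one must check that the prolongation $\alpha\mapsto\alpha^A$ intertwines the ordinary exterior derivative $d$ on $\Lambda(M)$ with the operator $d^A$ on $\Lambda(M^A,A)$. The cleanest route is to evaluate both sides on prolonged vector fields $\theta_1^A,\dots,\theta_{p+1}^A$, use the Koszul formula for $d^A$ together with the facts $\theta^A(g^A)=[\theta(g)]^A$ and $[\theta_i,\theta_j]^A=[\theta_i^A,\theta_j^A]$ already recorded in the introduction, and compare term by term with the prolongation of the Koszul formula for $d$; since prolonged fields locally generate the module of $A$-derivations, agreement on them forces the identity. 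The rest of the argument is purely formal, so this intertwining relation is where the real content sits.
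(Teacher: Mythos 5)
Your proposal follows exactly the paper's own argument: apply the preceding proposition to get $i_{\theta^{A}}\Omega^{A}=(i_{\theta}\Omega)^{A}$, then use the intertwining identity $d^{A}(\alpha^{A})=(d\alpha)^{A}$ to obtain $d^{A}(i_{\theta^{A}}\Omega^{A})=[d(i_{\theta}\Omega)]^{A}$, and conclude the equivalence. If anything, you are more careful than the paper, since you flag that the intertwining identity and the injectivity of $\alpha\longmapsto\alpha^{A}$ need justification, both of which the paper uses silently.
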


\begin{proof}
For any $\theta \in \mathfrak{X}(M)$, we have 
\begin{eqnarray*}
d^{A}(i_{\theta ^{A}}\Omega ^{A}) &=&d^{A}[(i_{\theta }\Omega )^{A}] \\
&=&[d(i_{\theta }\Omega )]^{A}.
\end{eqnarray*}%
Thus, $\theta $ is locally hamiltonian, i.e $d(i_{\theta }\Omega )=0$ if and
only if, $d^{A}(i_{\theta }\Omega ^{A})=0$ i.e $\theta ^{A}:C^{\infty
}(M^{A},A)\longrightarrow C^{\infty }(M^{A},A)$ is a locally hamiltonian
vector field.
\end{proof}

\begin{theorem}
A vector field $X:C^{\infty }(M^{A},A)\longrightarrow C^{\infty }(M^{A},A)$
on $M^{A}$ locally hamiltonian is a derivation of the $A$-Lie algebra
induced by the $A$-structure of Poisson defined by the symplectic $A$%
-manifold $(M^{A},\Omega ^{A}).$
\end{theorem}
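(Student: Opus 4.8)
The plan is to show that a locally Hamiltonian vector field $X$ on $M^A$ satisfies the Leibniz-type identity
\begin{equation*}
X(\{\varphi,\psi\}_{\Omega^A})=\{X(\varphi),\psi\}_{\Omega^A}+\{\varphi,X(\psi)\}_{\Omega^A}
\end{equation*}
for all $\varphi,\psi\in C^\infty(M^A,A)$, which is precisely the assertion that $X$ is a derivation of the $A$-Lie algebra $(C^\infty(M^A,A),\{,\}_{\Omega^A})$. The key bridge is the preceding Theorem, which identifies $\{,\}_{\Omega^A}$ with the prolonged Poisson bracket $\{,\}_A$, together with the earlier Proposition asserting that every locally Hamiltonian vector field on the $A$-Poisson manifold is already a derivation of the Poisson $A$-algebra.

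First I would recall that, by the Theorem of the previous subsection, for any $\varphi\in C^\infty(M^A,A)$ one has $\widetilde{\tau_\varphi}=X_\varphi$, and hence $\{\varphi,\psi\}_{\Omega^A}=\{\varphi,\psi\}_A$ for all $\varphi,\psi$. This means the symplectic situation is a special instance of the general $A$-Poisson setting already treated. Next I would invoke the definition: $X$ locally Hamiltonian on the symplectic $M^A$ means the $A$-form $i_X\Omega^A$ is $d^A$-closed. The task is to translate this closedness into the cocycle condition $\widetilde{d_A}X=0$ expressed in the Poisson cohomology, so that the earlier Proposition (locally Hamiltonian $X$ on an $A$-Poisson manifold is a derivation of the Poisson $A$-algebra) can be applied verbatim.

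The main step is therefore to expand $(\widetilde{d_A}X)(\varphi,\psi)$ using the explicit formula for $\widetilde{d_A}$ given in the excerpt, namely
\begin{equation*}
(\widetilde{d_A}X)(\varphi,\psi)=\widetilde{\tau_\varphi}[X(\psi)]-\widetilde{\tau_\psi}[X(\varphi)]-X(\{\varphi,\psi\}_A),
\end{equation*}
and then to rewrite each term via $\widetilde{\tau_\varphi}(\cdot)=\{\varphi,\cdot\}_{\Omega^A}$. Setting this equal to zero and unwinding the skew-symmetry of the bracket yields exactly the Leibniz identity displayed above. Because the identification $\{,\}_{\Omega^A}=\{,\}_A$ holds, the hypothesis that $i_X\Omega^A$ is $d^A$-closed is equivalent to $\widetilde{d_A}X=0$ in the Poisson complex, closing the argument.

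The part I expect to require the most care is justifying the equivalence between $d^A$-closedness of $i_X\Omega^A$ and the Poisson-cohomological condition $\widetilde{d_A}X=0$; this rests on the musical correspondence $\varphi\mapsto X_\varphi$ induced by the nondegenerate form $\Omega^A$ and on the naturality relation $(i_\theta\omega)^A=i_{\theta^A}(\omega^A)$ established in the Proposition just above. Once these two translations are in place, the conclusion follows immediately from the already-proved Proposition on $A$-Poisson manifolds, so no genuinely new computation beyond the bookkeeping of the two bracket descriptions should be needed.
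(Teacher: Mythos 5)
Your overall strategy --- identify $\{,\}_{\Omega ^{A}}$ with $\{,\}_{A}$ via the theorem $\widetilde{\tau _{\varphi }}=X_{\varphi }$, translate the symplectic hypothesis into the Poisson-cohomological condition $\widetilde{d_{A}}X=0$, and then quote the earlier proposition that a $\widetilde{d_{A}}$-closed vector field is a derivation of the Poisson $A$-algebra --- is a legitimate reorganization, and it does differ from the paper's route: the paper never passes through $\widetilde{d_{A}}$ at all, but works entirely on the symplectic side, expanding $d^{A}(i_{X}\Omega ^{A})(X_{\varphi },X_{\psi })=0$ by the Cartan-type formula and converting each term into brackets to reach the Leibniz identity directly.

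However, there is a genuine gap exactly at the step you flag yourself. The claim that ``because the identification $\{,\}_{\Omega ^{A}}=\{,\}_{A}$ holds, the hypothesis that $i_{X}\Omega ^{A}$ is $d^{A}$-closed is equivalent to $\widetilde{d_{A}}X=0$'' is a non sequitur: the bracket identification is a statement about functions, while the two closedness conditions live in two different complexes ($d^{A}$ acts on $A$-valued differential forms on $M^{A}$, whereas $\widetilde{d_{A}}$ acts on skew-symmetric multilinear maps on $C^{\infty }(M^{A},A)$), so relating them requires an actual computation, not just the equality of brackets. Moreover, the ingredient you cite for this bridge, the naturality relation $(i_{\theta }\omega )^{A}=i_{\theta ^{A}}(\omega ^{A})$, cannot help here: it concerns prolongations $\theta ^{A}$ of objects defined on $M$, whereas your $X$ is an arbitrary vector field on $M^{A}$, not of the form $\theta ^{A}$. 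What is actually needed is the following: evaluate the $2$-form $d^{A}(i_{X}\Omega ^{A})$ on pairs of Hamiltonian fields $(X_{\varphi },X_{\psi })$, use $\Omega ^{A}(X,X_{\psi })=-X(\psi )$ (from $i_{X_{\psi }}\Omega ^{A}=d^{A}\psi $ and skew-symmetry of $\Omega ^{A}$) together with $[X_{\varphi },X_{\psi }]=X_{\{\varphi ,\psi \}_{\Omega ^{A}}}$, and conclude that
\begin{equation*}
d^{A}(i_{X}\Omega ^{A})(X_{\varphi },X_{\psi })=-(\widetilde{d_{A}}X)(\varphi ,\psi )\text{,}
\end{equation*}
so that $d^{A}$-closedness of $i_{X}\Omega ^{A}$ forces $\widetilde{d_{A}}X=0$ and your reduction goes through. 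But this computation is precisely the content of the paper's own proof; your route does not bypass it, it only postpones it, and as written your proposal leaves it unproved.
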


\begin{proof}
Let $(M^{A},\Omega ^{A})$ be a symplectic manifold. For any $\varphi ,\psi
\in C^{\infty }(M^{A},A)$,%
\begin{align*}
\left\{ \varphi ,\psi \right\} _{\Omega ^{A}}& =-\Omega ^{A}(X_{\varphi
},X_{\psi }) \\
& =X_{\varphi }(\psi )
\end{align*}%
If $X$ is locally hamiltonian vector field, we have $d^{A}(i_{X}\Omega
^{A})=0$ i.e. for any $X$ and $Y\in \mathfrak{X}(M^{A})$,%
\begin{equation*}
d^{A}(i_{X}\Omega ^{A})(X,Y)=0\text{.}
\end{equation*}%
In particular, for any $X_{\varphi }$ and $X_{\psi },$ we have $\ $%
\begin{eqnarray*}
0 &=&(d^{A}(i_{X}\Omega ^{A}))(X_{\varphi },X_{\psi }) \\
&=&X_{\varphi }[i_{X}\Omega ^{A}(X_{\psi })]-X_{\psi }[i_{X}\Omega
^{A}(X_{\varphi })]-i_{X}\Omega ^{A}([X_{\varphi },X_{\psi }])
\end{eqnarray*}

Therefore%
\begin{equation*}
i_{X}\Omega ^{A}([X_{\varphi },X_{\psi }])=X_{\varphi }[i_{X}\Omega
^{A}(X_{\psi })]-X_{\psi }[i_{X}\Omega ^{A}(X_{\varphi })]
\end{equation*}%
i.e%
\begin{equation*}
\Omega ^{A}(X,[X_{\varphi },X_{\psi }])=X_{\varphi }[\Omega ^{A}(X,X_{\psi
})]-X_{\psi }[\Omega ^{A}(X,X_{\varphi })]\text{ }
\end{equation*}

Hence%
\begin{equation*}
X(\{\varphi ,\psi \}_{\Omega ^{A}})=\{X(\varphi ),\psi \}_{\Omega
^{A}}+\{\varphi ,X(\psi )\}_{\Omega ^{A}}.\text{ }
\end{equation*}%
That ends the proof.
\end{proof}

\begin{proposition}
Let $(M,\Omega )$  be a symplectic manifold. If a vector field%
\begin{equation*}
\theta :C^{\infty }(M)\longrightarrow C^{\infty }(M)
\end{equation*}%
is globally hamiltonian then\ the vector field 
\begin{equation*}
\theta ^{A}:C^{\infty }(M^{A},A)\longrightarrow C^{\infty }(M^{A},A)
\end{equation*}%
is globally hamiltonian.
\end{proposition}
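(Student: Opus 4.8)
The plan is to exhibit an explicit potential for $\theta^{A}$ by prolonging the potential of $\theta$. Since $\theta$ is globally hamiltonian, by definition there is a function $f\in C^{\infty}(M)$ with $i_{\theta}\Omega=-d(f)$. The natural candidate for the potential of $\theta^{A}$ is then $\varphi=f^{A}\in C^{\infty}(M^{A},A)$, so that the goal reduces to establishing $i_{\theta^{A}}\Omega^{A}=-d^{A}(f^{A})$, which is exactly the condition for $\theta^{A}$ to be globally hamiltonian.

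First I would apply the prolongation operation to the defining identity $i_{\theta}\Omega=-d(f)$, which yields $(i_{\theta}\Omega)^{A}=-(df)^{A}$. On the left-hand side, the interior-product proposition proved above gives $(i_{\theta}\Omega)^{A}=i_{\theta^{A}}(\Omega^{A})$. On the right-hand side, I would invoke the compatibility of the cohomology operator $d^{A}$ with prolongation, namely $(df)^{A}=d^{A}(f^{A})$, which is precisely the relation already used in the proof of the locally hamiltonian case. Combining these two substitutions gives $i_{\theta^{A}}\Omega^{A}=-d^{A}(f^{A})$, so that $\theta^{A}$ is globally hamiltonian with potential $\varphi=f^{A}$.

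The only delicate point is the commutation $d^{A}(f^{A})=(df)^{A}$. For a $0$-form this asserts that the de Rham-type operator $d^{A}$ associated with the representation $\mathfrak{X}(M^{A})\longrightarrow Der[C^{\infty}(M^{A},A)]$ restricts, on prolonged functions, to the prolongation of the exterior derivative $d$ on $M$; this is the identity $d^{A}(\omega^{A})=(d\omega)^{A}$ read at degree $0$, and it has already been exploited in the locally hamiltonian argument, so no new computation is required. In this way the proof becomes a direct transcription of the globally hamiltonian statement in the Poisson setting, with the symplectic description $i_{\theta}\Omega=-d(f)$ playing the role of the relation $\theta=d_{ad}(f)$ and $d^{A}$ replacing $\widetilde{d_{A}}$.
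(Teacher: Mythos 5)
Your proof is correct and follows essentially the same route as the paper's: prolong the defining identity $i_{\theta}\Omega=-d(f)$, rewrite the left side via $(i_{\theta}\Omega)^{A}=i_{\theta^{A}}(\Omega^{A})$ and the right side via $(df)^{A}=d^{A}(f^{A})$, and read off the potential $f^{A}$. If anything, you track the sign more carefully than the paper, which writes $[-d(f)]^{A}=d^{A}(f^{A})$ where $-d^{A}(f^{A})$ is meant (harmless, since a potential is only determined up to sign of the convention).
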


\begin{proof}
If $\theta $ is globally hamiltonian, then there exists $f\in C^{\infty }(M)$
such that $i_{\theta }\Omega =-d(f)$. Then, 
\begin{eqnarray*}
\left( i_{\theta }\Omega \right) ^{A} &=&[-d(f)]^{A} \\
&=&d^{A}\left( f^{A}\right) 
\end{eqnarray*}

Thus%
\begin{equation*}
i_{\theta ^{A}}\Omega ^{A}=d^{A}\left( f^{A}\right) \text{.}
\end{equation*}
i.e  $\theta ^{A}$ is globally hamiltonian.
\end{proof}

\end{document}